%
\documentclass[reqno]{amsart}

\usepackage{latexsym,upref,enumerate}
\usepackage{enumitem} 
\usepackage{nicefrac}
\usepackage{mathtools}
\usepackage{cases}
\usepackage[final]{microtype} 
\usepackage{color}
\usepackage[pdftex,dvipsnames]{xcolor}  
\usepackage[colorinlistoftodos,prependcaption,textsize=tiny]{todonotes}
\usepackage{xargs}

\newcommand{\be}{\begin{equation}}
\newcommand{\ee}{\end{equation}}
\newcommand{\bew}{\begin{equation*}} 
\newcommand{\eew}{\end{equation*}}

\newcommand{\N}{\mathbb N}
\newcommand{\R}{\mathbb R}

%
%
%
\newtheorem{theorem}{Theorem}[section]
\newtheorem{lemma}[theorem]{Lemma}

\theoremstyle{definition}

\numberwithin{equation}{section}

 \begin{document}

 \title{On the Cauchy Problem for the Fractional Camassa-Holm Equation}

\author{N. Duruk Mutluba\c{s}}
    \email{nilaydm@sabanciuniv.edu}

\address{Faculty of Engineering and Natural Sciences, Sabanc{\i} University, Orta Mahalle, 34956 Tuzla, Istanbul, Turkey}

\begin{abstract}
In this paper, we consider the Cauchy problem for the fractional Camassa-Holm equation which models  the propagation of small-but-finite amplitude long unidirectional  waves  in a nonlocally and nonlinearly elastic medium. Using Kato's semigroup approach for quasilinear evolution equations, we prove that the Cauchy problem is locally well-posed for data in $H^{s}({\Bbb R})$, $s>{\frac{5}{2}}$. 
\end{abstract}

\keywords{Fractional Camassa-Holm  equation, Local well-posedness, Semigroup Theory}
\maketitle

\setcounter{equation}{0}
\section{Introduction}
\noindent

In the present paper, we prove that the Cauchy problem defined for the fractional Camassa-Holm (fCH) equation: 
\begin{align}
   u_t&+u_x+ uu_x+\frac{3}{4} (-\partial_x^2)^{\nu}u_{x}+\frac{5}{4} (-\partial_x^2)^{\nu}u_{t} \nonumber \\ 
   & +\frac{1}{4} [2(-\partial_x^2)^{\nu}(uu_x)+u(-\partial_x^2)^{\nu}u_x]=0 \label{fCH} \\
 &   u(x,0)=u_0(x) \label{IC}
\end{align}
is locally well-posed for sufficiently smooth initial data. Here, $\nu\geq 1$ is a constant which may not be an integer. 

The fCH equation was introduced in \cite{erbay1} as an asymptotic model equation describing the propagation of small-but-finite amplitude, long unidirectional waves in a one-dimensional infinite, homogeneous medium  made of  nonlocally and nonlinearly elastic material. In the absence of body forces the equation of motion is
\bew
\rho_{0}u_{tt}=(S(u_{X}))_{X}
\eew
where $\rho_{0}$ is the mass density of the medium, $S=S(u_{X})$ is the stress and $u(X,t)$ is the displacement at time $t$ of a reference point $X$ \cite{duruk1}. Nonlocality requires the stress $S$ to be expressed as a general nonlinear nonlocal function of the strain $u_X$ since the stress at a reference point is a nonlinear function of the strain at all points in the body:
 \bew
        S(X,t)=\int_{-\infty}^{\infty} \alpha(|X-Y|)\sigma(Y,t)\mbox{d}Y, ~~~~~
        \alpha :   \mbox{ kernel}
 \eew
After non-dimensionalization and appropriate change in notation, the equation of motion becomes
\be
u_{tt}=[\beta \ast (u+g(u))]_{xx}.
\ee
Here, the constitutive behavior of the medium is described by the convolution integral operator with the general kernel function $\beta(x)$ which is nonnegative, even function, monotonically decreasing for $x>0$, see \cite{duruk2} for the most common used kernel functions in the literature. While deriving fCH in \cite{erbay1}, a fractional-type kernel function was used, that is when the Fourier transform of the kernel function has fractional powers, i.e.  $ \hat{\beta}(\xi)=(1+\xi^{2\nu})^{-1}~$, where $\nu\geq 1$ and may not be an integer. This type of kernel function enables to derive more general evolution equations, in particular fCH equation. It is worth noting that when $\nu=1$, (\ref{fCH}) reduces to the classical Camassa-Holm (CH) equation
 \begin{equation}
      u_{t}+\kappa_{1} (u_{x}-u_{xx})+3 uu_{x}-u_{xxt}=\kappa_{2}(2 u_{x}u_{xx}+uu_{xxx}), \label{c-h}
 \end{equation}
which models the propagation of unidirectional small-amplitude shallow-water waves\cite{camassa, CE1, constantin1, ionescu, johnson2, lannes}. It is an infinite-dimensional completely integrable Hamiltonian system \cite{ constantin4, constantin3} and describes the evolution of the horizontal fluid velocity at a certain depth when nonlinear effects dominate dispersive effects\cite{johnson1}. Therefore, even for smooth initial data, there exist solutions which develop singularities in finite time in the form of wave breaking, i.e. the solution remains bounded but its slope becomes unbounded in finite time  \cite{CE2}. In addition to the studies about water waves, we refer the reader to \cite{erbay1} and the references  therein for the derivation of the CH equation as an appropriate model equation for nonlinear dispersive elastic waves. 
 
In \cite{erbay1}, as a by-product of the asymptotic derivation, the fractional Korteweg-de Vries (fKdV) and fractional  Benjamin-Bona-Mahony (fBBM) equations were also obtained. The fKdV and the fBBM equations are 
 \begin{equation}
     u_{t}+ u_{x}+ uu_{x}-{1\over 2}(-\partial_x^2)^{\nu} u_{x}=0
         \label{fkdv-or}
 \end{equation}
and
 \begin{equation}
       u_{t}+ u_{x}+ uu_{x}+{3\over 4}(-\partial_x^2)^{\nu}u_{x}+{5\over 4}(-\partial_x^2)^{\nu}u_{t}=0,
      \label{fbbm-or}
 \end{equation}
respectively.

Cauchy problems corresponding to the fKdV and fBBM equations have also been studied and qualitative properties of the solutions have been analyzed \cite{HJ, johnson, KS1,  KS2, LPS1, LPS2, LS, Pava}.

To our knowledge, no result concerning the Cauchy problem for the fCH equation is available in the literature. In order to analyze the qualitative properties of the solutions, existence of solutions is a prerequisite. Therefore, the question of the local well-posedness arises naturally. The aim of the present study is to prove, using Kato's theory, that the Cauchy problem (\ref{fCH})-(\ref{IC}) is locally well posed: 

 \begin{theorem}
 \label{Lwp}
   Let $u_0\in H^{s}$, $s>\frac{5}{2}$ be given. Then there exists a maximal time of existence $T>0$, depending on $u_0$, such that there is a unique solution $u$ to
   (\ref{fCH})-(\ref{IC}) satisfying
   \begin{equation*}
    u\in C([0,T), H^{s})\cap C^1([0,T),H^{s-1}).
   \end{equation*}
Moreover, the map $u_0\in H^{s}	\rightarrow u $ is continuous from $H^{s}$ to \\
\mbox{$C([0,T),H^{s})\cap C^1([0,T),H^{s-1})$}.
  \end{theorem}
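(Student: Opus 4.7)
The plan is to cast the Cauchy problem in the abstract quasilinear form
\begin{equation*}
u_t + A(u)u = f(u), \qquad u(0)=u_0,
\end{equation*}
and invoke Kato's theorem on quasilinear evolution equations in the Sobolev scale $X = L^2(\mathbb{R})$, $Y = H^s(\mathbb{R})$, using the isometric isomorphism $Q = (1-\partial_x^2)^{s/2}:Y\to X$. Writing $\Lambda^{2\nu}:=(-\partial_x^2)^\nu$ and $P:=1+\tfrac{5}{4}\Lambda^{2\nu}$, my first step is algebraic: collecting the $u_t$ terms in (\ref{fCH}) and using the identity $\Lambda^{2\nu}(uu_x) = u\,\Lambda^{2\nu}u_x + [\Lambda^{2\nu},u]u_x$, the equation becomes
\begin{equation*}
Pu_t + (1+u)\bigl(1+\tfrac{3}{4}\Lambda^{2\nu}\bigr)u_x + \tfrac{1}{2}\,[\Lambda^{2\nu},u]u_x = 0.
\end{equation*}
Since $P$ is a positive Fourier multiplier that is smoothing of order $2\nu$, inverting it suggests the natural choices
\begin{equation*}
A(u) := P^{-1}(1+u)\bigl(1+\tfrac{3}{4}\Lambda^{2\nu}\bigr)\partial_x, \qquad f(u) := -\tfrac{1}{2}\,P^{-1}[\Lambda^{2\nu},u]u_x,
\end{equation*}
so that $A(u)$ is a pseudodifferential operator of order one whose principal symbol at high frequencies is $\tfrac{3}{5}(1+u(x))\,i\xi$.

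Next I would verify each of Kato's hypotheses in turn. \textbf{(i)} Quasi-$m$-accretivity of $A(u)$ on $L^2$: the principal symbol is purely imaginary with real coefficient $1+u$, so the symmetric part $A(u)+A(u)^*$ is a zero-order pseudodifferential operator bounded on $L^2$ by a multiple of $\|u_x\|_{L^\infty}$, giving $A(u) \in G(L^2,1,\beta)$ for $u$ in bounded subsets of $H^s$. \textbf{(ii)} $A(u)\in\mathcal{L}(Y,X)$ is immediate from its order. \textbf{(iii)} The commutator $[Q,A(u)]\,Q^{-1}$ extends to a bounded operator on $L^2$, uniformly for $u$ in bounded subsets of $H^s$; this reduces to a Kato--Ponce-type commutator estimate for pseudodifferential symbols of order one with $H^s$ coefficients. \textbf{(iv)} Lipschitz dependence: $A(u)-A(v) = P^{-1}(u-v)(1+\tfrac{3}{4}\Lambda^{2\nu})\partial_x$, and its norm as a map $Y\to X$ is controlled by $\|u-v\|_{L^\infty} \le C\|u-v\|_{H^s}$ via Moser-type product estimates. \textbf{(v)} Boundedness and Lipschitz continuity of $f:Y\to Y$: a fractional commutator inequality of Kato--Ponce type with exponent $2\nu$, combined with the $2\nu$-order smoothing of $P^{-1}$, yields $\|f(u)\|_{H^s}\le C\|u\|_{H^s}^2$ and a matching Lipschitz bound. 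With all five conditions in hand, Kato's theorem delivers a maximal $T>0$, a unique solution $u\in C([0,T),H^s)\cap C^1([0,T),H^{s-1})$, and the continuous dependence on $u_0$ asserted in Theorem~\ref{Lwp}.

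The main technical obstacle will be step (iii). In the classical Camassa--Holm limit $\nu=1$, $A(u)$ reduces to the transport operator $u\partial_x$ modulo a lower-order piece, so the commutator bound follows from standard calculus inequalities for differential operators. For general $\nu\geq 1$, however, $A(u)$ is a genuine pseudodifferential operator involving the non-polynomial Fourier multiplier $P^{-1}(1+\tfrac{3}{4}\Lambda^{2\nu})$, so I would need a commutator estimate valid for symbols of arbitrary real order $2\nu+1$ modulated by $H^s$ coefficients. A closely related delicate point appears in step (v): the $2\nu$ derivatives produced by $[\Lambda^{2\nu},u]u_x$ must be absorbed precisely against the smoothing of $P^{-1}$ while retaining enough regularity to close the nonlinear estimates, and it is this tight balance that accounts for the threshold $s>\tfrac{5}{2}$.
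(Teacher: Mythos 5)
Your algebraic reformulation is correct (indeed more faithful than the paper's, which discards the constant coefficients), but the functional-analytic setup contains a genuine gap. You take $X=L^2$, $Y=H^s$ with $Q=(1-\partial_x^2)^{s/2}$, whereas the paper takes $X=H^{s-1}$, $Y=H^s$ with the order-one isomorphism $S=\Lambda$, and this difference is not cosmetic. Kato's hypotheses (A2) and (A4) demand Lipschitz continuity measured in the \emph{$X$-norm} of $u-v$, i.e.\ $\|A(u)-A(v)\|_{\mathcal L(Y,X)}\le C\|u-v\|_X$ and $\|f(u)-f(v)\|_X\le C\|u-v\|_X$, because the underlying contraction argument runs in the topology of $X$. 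In your step (iv) you bound $\|A(u)-A(v)\|_{\mathcal L(Y,X)}$ by $\|u-v\|_{L^\infty}\le C\|u-v\|_{H^s}$, which is the $Y$-norm rather than the $X$-norm, so (A2) is not verified as stated. Worse, with $X=L^2$ the required estimate looks out of reach near the threshold: $(A(u)-A(v))z=P^{-1}\bigl[(u-v)\bigl(1+\tfrac{3}{4}(-\partial_x^2)^{\nu}\bigr)z_x\bigr]$ forces you to multiply $u-v$, controlled only in $L^2$, against $\bigl(1+\tfrac{3}{4}(-\partial_x^2)^{\nu}\bigr)z_x\in H^{s-2\nu-1}$, a space of \emph{negative} index whenever $2\nu+\tfrac{1}{2}<s<2\nu+1$; a duality argument rescues $s\ge 2\nu+1$ but not the claimed range $s>2\nu+\tfrac{1}{2}$. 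The same difficulty afflicts the $X$-Lipschitz half of your step (v), where $f(u)-f(v)$ produces commutators with one factor controlled only in $L^2$ or $H^{-1}$.

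The paper's choices are designed precisely to avoid this. With $X=H^{s-1}$ an algebra ($s-1>\tfrac{3}{2}$), all products and commutators in (A2) and (A4) are estimated directly in $\|u-v\|_{s-1}$, and with $S=\Lambda$ of order one the operator $B(u)=[\Lambda,a(u)]\Lambda^{-1}\partial_x$ in (A3) is handled by a single order-one commutator lemma rather than the order-$s$ Kato--Ponce commutator that your $Q$ forces. Structurally, the paper keeps only $a(u)\partial_x$ in $A(u)$ (transport plus an order-$(-1)$ commutator correction) and moves the smoothed Burgers term into $f$, whereas you keep the full order-$(2\nu+1)$ dispersion inside $A(u)$ under $P^{-1}$; that placement is exactly what creates the negative-index products above. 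Your step (iii) and the generation/m-accretivity part of step (i) are also only sketched (the paper devotes two lemmas to the range condition), but those are plausibly completable; it is the $X$-Lipschitz estimates that would fail as proposed, and they should either be reworked in the weaker space $X=H^{s-1}$ or the regularity assumption strengthened.
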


The paper is organized as follows.  Section 2 presents the approach proposed by Kato using semigroup theory for quasi-linear equations. In Section 3, the local well-posedness of the Cauchy problem (\ref{fCH})-(\ref{IC}) is proved in the light of Kato's theory and suitable reformulations.

\setcounter{equation}{0}
\section{Semigroup Approach}\label{Katotheory}

\noindent
Consider the abstract quasi-linear evolution equation in the Hilbert space $X$:
  \begin{equation}
  \label{qlee}
     u_t +A(u)u = f(u), ~~~~t\geq 0,~~~~~u(0)=u_0.
  \end{equation}
Let $Y$ be a second Hilbert space such that $Y$ is continuously and densely injected into $X$ and let $S:Y\rightarrow X$ be a topological isomorphism. Assume that

\begin{itemize}
\item[(A1)]  For any given $r>0$ it holds that for all $u \in  \mathrm B_r(0) \subseteq Y$ (the ball around the origin in $Y$ with radius $r$), the linear operator $A(u)\colon X \to X$ generates a strongly continuous semigroup $T_u(t)$ in $X$ which satisfies
\begin{equation*}
\| T_u(t) \|_{\mathcal L(X)} \leq \mathrm e^{\omega_r t} \quad \text{for all} \quad t\in [0,\infty)
\end{equation*}
for a uniform constant $\omega_r > 0$.
\item[(A2)] $A$ maps $Y$ into $\mathcal L(Y,X)$, more precisely the domain $D(A(u))$ contains $Y$ and the restriction $A(u)|_Y$ belongs to $\mathcal L(Y,X)$ for any $u\in Y$. Furthermore $A$ is Lipschitz continuous in the sense that for all $r>0$ there exists a constant $C_1$ which only depends on $r$ such that
\begin{equation*}
\| A(u) - A(v) \|_{\mathcal L(Y,X)} \leq C_1 \, \|u-v\|_X 
\end{equation*}
for all $u,~v$ inside $\mathrm B_r(0) \subseteq Y$.
\item[(A3)] For any $u\in Y$ there exists a bounded linear operator $B(u) \in \mathcal L(X)$ satisfying $B(u) = S A(u) S^{-1} - A(u)$ and $B \colon Y \to \mathcal L(X)$ is uniformly bounded on bounded sets in $Y$. Furthermore for all $r>0$ there exists a constant $C_2$ which depends only on $r$ such that  
\begin{equation*}
\| B(u) - B(v)\|_{\mathcal L(X)} \leq C_2 \, \|u-v\|_Y
\end{equation*}
for all $u,~v \in \mathrm B_r(0)\subseteq Y$.
\item[(A4)] The map $f\colon Y \to Y$ is locally $X$-Lipschitz continuous in the sense that for every $r>0$ there exists a constant $C_3>0$, depending only on $r$, such that
\begin{equation*}
\| f(u) - f(v)\|_{X} \leq C_3 \, \|u-v\|_X \quad \text{for all} \; u,~v \in \mathrm B_r(0) \subseteq Y
\end{equation*}
and locally $Y$-Lipschitz continuous in the sense that for every $r>0$ there exists a constant $C_4>0$, depending only on $r$, such that
\begin{equation*}
\| f(u) - f(v)\|_{Y} \leq C_4 \, \|u-v\|_Y \quad \text{for all} \; u,~v \in \mathrm B_r(0) \subseteq Y.
\end{equation*}
\end{itemize}

\begin{theorem} \cite{KatoI}
\label{kato}
Assume that (A1)-(A4) hold. Then for given $u_0\in Y$, there is a maximal time of existence $T>0$, depending on $u_0$, and a unique solution $u$ to (\ref{qlee}) in $X$ such that
\begin{displaymath}
u=u(u_0,.)\in C([0,T),Y)\cap C^1([0,T),X).
\end{displaymath}
Moreover, the solution depends continuously on the initial data,\\
i.e. the map $u_0\rightarrow u(u_0,.) $ is continuous from $Y$ to \mbox{$C([0,T),Y)\cap C^1([0,T),X)$}.
\end{theorem}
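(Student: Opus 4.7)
The plan is to apply Kato's Theorem \ref{kato} to the Cauchy problem (\ref{fCH})--(\ref{IC}) after recasting it as an abstract quasi-linear evolution equation of the form (\ref{qlee}). I take the Hilbert spaces $X = L^2({\mathbb R})$ and $Y = H^s({\mathbb R})$ with $s>\tfrac{5}{2}$, and the topological isomorphism $S = \Lambda^s := (1-\partial_x^2)^{s/2}\colon Y \to X$. Introducing the Fourier multipliers $L := 1 + \tfrac{5}{4}(-\partial_x^2)^{\nu}$ and $M := 1 + \tfrac{3}{4}(-\partial_x^2)^{\nu}$, a direct computation (just expanding $[(-\partial_x^2)^{\nu},u]u_x$) shows
\[
uu_x + \tfrac{1}{2}(-\partial_x^2)^{\nu}(uu_x) + \tfrac{1}{4}u(-\partial_x^2)^{\nu}u_x \;=\; u\,Mu_x + \tfrac{1}{2}\bigl[(-\partial_x^2)^{\nu},\,u\bigr]u_x,
\]
so that (\ref{fCH}) reads $Lu_t + Mu_x + u\,Mu_x + \tfrac{1}{2}[(-\partial_x^2)^{\nu},u]u_x = 0$. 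Applying $L^{-1}$ and using the Fourier-multiplier identity $L^{-1}M = \tfrac{3}{5}I + \tfrac{2}{5}L^{-1}$ to peel off the principal transport part, I shall define
\[
A(u)v := \tfrac{3}{5}(1+u)\partial_x v - \tfrac{1}{4}L^{-1}\bigl[(-\partial_x^2)^{\nu},u\bigr]\partial_x v, \quad f(u) := -\tfrac{2}{5}L^{-1}\partial_x\!\left(u + \tfrac{1}{2}u^2\right),
\]
so the problem takes the abstract form $u_t + A(u)u = f(u)$, $u(0)=u_0$.

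The second step is to verify (A1), (A2), and (A4). Since $s>\tfrac{5}{2}$ forces the embedding $H^s \hookrightarrow C^2_b$, the principal part $\tfrac{3}{5}(1+u)\partial_x$ is quasi-$m$-accretive on $L^2$ by the Lumer--Phillips theorem, via the standard integration-by-parts identity $\langle(1+u)v_x,v\rangle_{L^2} = -\tfrac{1}{2}\int u_x v^2\,dx$, and hence generates a $C_0$-semigroup with exponential bound $\mathrm e^{\omega_r t}$ depending only on $\|u\|_Y$. The nonlocal summand $L^{-1}[(-\partial_x^2)^{\nu},u]\partial_x$ has total order $-2\nu + (2\nu-1) + 1 = 0$ and is therefore bounded on $L^2$ uniformly on bounded subsets of $Y$; it is absorbed by the bounded-perturbation theorem for semigroup generation, settling (A1). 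Hypotheses (A2) and (A4) reduce to Sobolev algebra and commutator estimates combined with the $2\nu$-order smoothing supplied by $L^{-1}$; in particular $f$ is a smoothing operator of order $1-2\nu \leq -1$, so both Lipschitz estimates in (A4) are immediate.

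The main obstacle will be (A3), namely the $L^2$-boundedness and $Y$-Lipschitz dependence of $B(u) = [\Lambda^s,A(u)]\Lambda^{-s}$. For the transport piece this reduces to estimating $[\Lambda^s,u]\partial_x\Lambda^{-s}$ on $L^2$, which is covered by the classical Kato--Ponce commutator inequality and needs only $s>\tfrac{3}{2}$. The delicate piece is the double commutator from the nonlocal part: since $\Lambda^s$ commutes with $L^{-1}$ and $\partial_x$ as Fourier multipliers, the estimate reduces to bounding $L^{-1}\bigl[\Lambda^s,\,[(-\partial_x^2)^{\nu},u]\bigr]\partial_x\Lambda^{-s}$ in $\mathcal L(L^2)$. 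Using the elementary identity $[\Lambda^s,[(-\partial_x^2)^{\nu},u]] = [(-\partial_x^2)^{\nu},[\Lambda^s,u]]$, the inner iterated commutator has order $s+2\nu-2$ with coefficients governed by $u_{xx}$; combining with $\partial_x$ (order $1$) and the smoothing $L^{-1}\Lambda^{-s}$ (order $-(s+2\nu)$) gives a net order of $-1$, and the operator norm is controlled by $\|u_{xx}\|_{L^\infty} \leq C\|u\|_{H^s}$. This is precisely where the threshold $s>\tfrac{5}{2}$ enters, through $H^s \hookrightarrow C^2_b$. Lipschitz dependence of $B$ in the $Y$-norm then follows from the multilinear structure of these commutator bounds by polarizing in $u-v$. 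Once (A1)--(A4) are established, Theorem \ref{kato} yields the unique solution $u \in C([0,T),H^s)\cap C^1([0,T),H^{s-1})$ with continuous dependence on $u_0$, which is exactly the conclusion of Theorem \ref{Lwp}.
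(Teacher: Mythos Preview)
Your proposal is not a proof of the stated theorem. Theorem~\ref{kato} is Kato's abstract well-posedness theorem for quasi-linear evolution equations; the paper does not prove it but simply quotes it from \cite{KatoI}. What you have written is instead a proof sketch of Theorem~\ref{Lwp}, obtained by \emph{applying} Theorem~\ref{kato} to the fCH equation. So as a proof of the statement you were given, the attempt is off target from the first line.

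If one reads your write-up as an attempted proof of Theorem~\ref{Lwp}, it is a reasonable variant of the paper's argument but with genuinely different choices. The paper takes $X=H^{s-1}$, $Y=H^{s}$ and the isomorphism $S=\Lambda=(1+(-\partial_x^2)^{\nu})^{1/2\nu}$, drops the inessential constants, and writes $A(u)=a(u)\partial_x$ with $a(u)=1+u+\Lambda^{-2\nu}[u,(-\partial_x^2)^{\nu}]$ and $f(u)=\Lambda^{-2\nu}\partial_x(u^{2})$; assumptions (A1)--(A4) are then checked via the commutator estimate of Lemma~\ref{L-comm}, and (A3) is a one-line computation because $S=\Lambda$ is a first-order multiplier. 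You instead take $X=L^{2}$, $S=(1-\partial_x^2)^{s/2}$, keep the physical coefficients via $L^{-1}M=\tfrac{3}{5}I+\tfrac{2}{5}L^{-1}$, and end up needing a Kato--Ponce / double-commutator argument for (A3). Your reformulation is algebraically correct and the order-counting for the nonlocal pieces is sound, but two points deserve care: with $X=L^{2}$ Kato's theorem yields $u\in C^{1}([0,T),L^{2})$, not $C^{1}([0,T),H^{s-1})$ as you assert in your last line (the stronger regularity must be recovered a posteriori from $u_t=-A(u)u+f(u)$); and your claimed identity $[\Lambda^{s},[(-\partial_x^2)^{\nu},u]]=[(-\partial_x^2)^{\nu},[\Lambda^{s},u]]$ follows from Jacobi only because $[\Lambda^{s},(-\partial_x^2)^{\nu}]=0$, which you should state. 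The paper's choice $X=H^{s-1}$ avoids both issues and makes (A3) essentially trivial.
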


\setcounter{equation}{0}
\section{Proof of Theorem \ref{Lwp}}

\noindent
Recall that fCH equation (\ref{fCH}) is given as:
\begin{displaymath}
 u_t+u_x+ uu_x+\frac{3}{4} (-\partial_x^2)^{\nu}u_{x}+\frac{5}{4} (-\partial_x^2)^{\nu}u_{t} 
    +\frac{1}{4} [2(-\partial_x^2)^{\nu}(uu_x)+u(-\partial_x^2)^{\nu}u_x]=0.
\end{displaymath}
Since particular choice of the constant coefficients 
do not have an impact on local well-posedness, they can be neglected and (\ref{fCH}) can be rewritten in quasi-linear equation form:   
\bew
u_t +  A (u) u =  f(u),
\eew
where
\be \label{ql}
 A(u) = \big(1 +  u +  \Lambda^{-2\nu} [u,(-\partial_x^2)^{\nu}] \big) \partial_x = a(u)\partial_x
\ee
with 
\bew
\Lambda = (1+(-\partial_x^2)^{\nu})^{1/2\nu}, \quad \text{where $\nu\geq 1$ may not be an integer} \quad
\eew
and 
\be \label{nl}
 f(u) =  \Lambda^{-2\nu} \partial_x \big( u^2 \big) .
\ee
Here, $[,]$ represents the usual commutator of the linear operators. 

Due to this reformulation, we choose Hilbert spaces $X\coloneqq (H^{s-1}, \|\cdot \|_{s-1})$ and $Y\coloneqq (H^{s},\|\cdot \|_s)$ with $s>2\nu+\frac{1}{2}\geq \frac{5}{2}$, define the isomorphism $S=\Lambda:Y\rightarrow X$ and prove the lemmas ensuring the validity of the assumptions (A1)-(A4).

The commutator estimate which will be used while proving the assumptions is given by the following lemma:

\begin{lemma}[\cite{Taylor}]
\label{L-comm} 
Let $m> 0$, $s\geq 0$ and $3/2<s+m\leq \sigma$. Then for all $f\in H^{\sigma}$ and $g\in H^{s+m-1}$ one has
\begin{displaymath}
||[\Lambda^{m},f]g||_{s} \leq C||f||_{\sigma} \, ||g||_{s+m-1}.
\end{displaymath}
where $C$ is a constant which is independent of $f$ and $g$ and $[,]$ represents the usual commutator of the linear operators.
\end{lemma}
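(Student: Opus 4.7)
The plan is to prove the estimate by Littlewood-Paley decomposition combined with Bony's paraproduct formula. Note that $\Lambda^m$ is a Fourier multiplier with symbol $\lambda(\xi):=(1+|\xi|^{2\nu})^{m/2\nu}$, which is smooth away from the origin and is classical of order $m$ at infinity, so $|\partial_\xi^k\lambda(\xi)|\le C_k\langle\xi\rangle^{m-k}$ for $|\xi|\ge 1$. Bony's decomposition writes
$$
fg=T_f g+T_g f+R(f,g),
$$
where $T_f g=\sum_j S_{j-N_0}f\cdot\Delta_j g$ is the paraproduct of $g$ by $f$, $\Delta_j$ and $S_j$ are the usual Littlewood-Paley dyadic block and partial sum, and $R(f,g)$ collects the balanced-frequency interactions. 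Substituting and regrouping decomposes $[\Lambda^m,f]g$ into three pieces, matching the three paraproduct components.

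The heart of the matter is the principal paraproduct commutator $[\Lambda^m,T_f]g$. Inside the $j$-th block the Fourier support of the argument lies in $\{|\xi|\sim 2^j\}$ while the frequency variable $\eta$ of $S_{j-N_0}f$ satisfies $|\eta|\ll |\xi|$. A first-order Taylor expansion
$$
\lambda(\xi)-\lambda(\xi-\eta)=\lambda'(\xi)\,\eta+O\bigl(\langle\xi\rangle^{m-2}|\eta|^{2}\bigr)
$$
shows that $[\Lambda^m,T_f]$ is essentially a Fourier multiplier of order $m-1$ in $g$, with the missing derivative falling on $f$. Placing $\partial_x f$ in $L^\infty$ via the embedding $H^{\sigma-1}\hookrightarrow L^\infty$ (valid since $\sigma\ge s+m>3/2$) and $\Lambda^{m-1}g$ in $L^2$ yields $2^{js}\|\Delta_j[\Lambda^m,T_f]g\|_{L^2}\le c_j\,\|f\|_\sigma\|g\|_{s+m-1}$ with $(c_j)\in\ell^2$; almost-orthogonality of the block supports then closes the sum.

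The other two pieces are easier. For $\Lambda^m T_g f$ one places $g$ in $L^\infty$ via $H^{s+m-1}\hookrightarrow L^\infty$ and lets $\Lambda^m$ fall on $f$, whose $H^{s+m}$-norm is controlled by $\|f\|_\sigma$ since $\sigma\ge s+m$. For the balanced remainder $\Lambda^m R(f,g)$, at each diagonal pair $(j,k)$ with $|j-k|\le N_0$ one applies Bernstein's inequality together with H\"older, and the resulting dyadic series is $\ell^2$-summable precisely in the regime $s+m>3/2$. Collecting the three estimates yields $\|[\Lambda^m,f]g\|_s\le C\|f\|_\sigma\|g\|_{s+m-1}$.

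The main obstacle is establishing the derivative gain for the principal paraproduct piece with constants uniform across dyadic scales. The cleanest route, which is the one followed in \cite{Taylor}, is to phrase the whole argument inside paradifferential calculus: $[\Lambda^m,T_f]$ is a paradifferential operator of order $m-1$ whose symbol seminorms are controlled by $\|f\|_\sigma$, and continuity of paradifferential operators between Sobolev spaces then delivers the estimate directly, avoiding the need to chase the Taylor remainder by hand at every scale.
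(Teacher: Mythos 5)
The paper does not prove this lemma at all: it is imported verbatim from \cite{Taylor}, so there is no in-paper argument to compare against. Your sketch --- Bony decomposition plus paradifferential calculus for the principal piece $[\Lambda^m,T_f]g$ --- is precisely the method of that reference and is essentially sound; the only points you gloss over are the cross terms $T_{\Lambda^m g}f$ and $R(f,\Lambda^m g)$ that arise because $f\Lambda^m g$ must be decomposed alongside $\Lambda^m(fg)$ (each is handled separately under $\sigma\geq s+m>3/2$), and the fact that for non-integer $\nu$ the symbol $(1+|\xi|^{2\nu})^{m/2\nu}$ is not smooth at $\xi=0$, a harmless low-frequency issue that must nonetheless be split off before invoking the classical $S^m_{1,0}$ calculus.
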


\noindent
Now, we verify the assumptions needed for Theorem \ref{Lwp}. We start with assumption (A1):
  \begin{lemma} \label{opA}
For any given $r>0$ it holds that for all $u \in  \mathrm B_r(0) \subseteq Y$, the linear operator $A(u)\colon X \to X$ with domain $D(A(u))\coloneqq \{ w \in H^{s-1}\colon A(u) w \in H^{s-1} \}$
generates a strongly continuous semigroup $T_u(t)$ in $X$ which satisfies
\be
\label{eq-SG}
    \| T_u(t) \|_{\mathcal L(X)} \leq \mathrm e^{\omega_r t} \quad \text{for all} \quad t\in [0,\infty)
\ee
for a uniform constant $\omega_r > 0$. i.e. The operator $A(u)$ given in (\ref{ql}), with domain
\begin{displaymath}
 \mathcal{D}(A)=\{\omega\in H^{s-1}:A(u)\omega\in H^{s-1}\}\subset H^{s-1}
\end{displaymath}
is quasi-m-accretive if $u\in H^{s}$, $s>\frac{5}{2}$.
\end{lemma}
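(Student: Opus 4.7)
The plan is to split $A(u)=A_1(u)+A_2(u)$, with $A_1(u)=(1+u)\partial_x$ a first-order transport operator and $A_2(u)=\Lambda^{-2\nu}[u,(-\partial_x^2)^{\nu}]\partial_x$ a lower-order remainder, then verify quasi-m-accretiveness of $A_1(u)$ on $X=H^{s-1}$ directly and treat $A_2(u)$ as a bounded perturbation. Combining the two via the bounded perturbation theorem for $C_0$-semigroups will then yield (\ref{eq-SG}).

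For the remainder, using $\Lambda^{2\nu}=1+(-\partial_x^2)^{\nu}$ I rewrite $A_2(u)w=-\Lambda^{-2\nu}[\Lambda^{2\nu},u]\partial_x w$. Combining the boundedness $\Lambda^{-2\nu}\colon H^{s-1-2\nu}\to H^{s-1}$ with Lemma~\ref{L-comm} applied with $m=2\nu$ and $\sigma=s$ (choosing the lemma's Sobolev exponent as $\max\{s-1-2\nu,0\}$ and using $H^{s-1-2\nu}\hookrightarrow L^2$ in the low-regularity endpoint) shows that $\|A_2(u)\|_{\mathcal L(X)}\le C(r)$ whenever $\|u\|_s\le r$; the threshold $s>2\nu+\frac{1}{2}$ is precisely what the hypotheses of Lemma~\ref{L-comm} demand.

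For the transport piece $A_1(u)$, quasi-accretiveness follows from the identity
$$\langle A_1(u)w,w\rangle_{s-1}=\langle(1+u)\Lambda^{s-1}\partial_x w,\Lambda^{s-1}w\rangle_{L^2}+\langle[\Lambda^{s-1},u]\partial_x w,\Lambda^{s-1}w\rangle_{L^2};$$
integration by parts in the first piece produces $-\tfrac{1}{2}\int u_x(\Lambda^{s-1}w)^2\,dx$, which is controlled by $\|u_x\|_{L^\infty}\|w\|_{s-1}^2$ via the Sobolev embedding $H^{s-1}\hookrightarrow W^{1,\infty}$ (valid since $s-1>\tfrac{3}{2}$), while the second piece is controlled using Lemma~\ref{L-comm} with $m=s-1$ and $\sigma=s$. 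This yields $|\langle A_1(u)w,w\rangle_{s-1}|\le \omega_r\|w\|_{s-1}^2$. The range condition $R(A_1(u)+\lambda I)=X$ for $\lambda>\omega_r$ is then established by viscosity regularization: solve the uniformly elliptic problem $\lambda v^{\eps}+(1+u)\partial_x v^{\eps}-\eps\partial_x^2 v^{\eps}=g$ by Lax--Milgram, use the $\eps$-uniform a priori estimate provided by the accretiveness bound to extract a limit, and identify that limit as a solution of $(A_1(u)+\lambda I)v=g$ in $X$. Lumer--Phillips then implies that $A_1(u)$ generates a quasi-contractive $C_0$-semigroup on $X$.

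Finally, the bounded perturbation theorem for $C_0$-semigroups implies that $A(u)=A_1(u)+A_2(u)$ generates a strongly continuous semigroup $T_u(t)$ on $X$ satisfying (\ref{eq-SG}) with $\omega_r$ replaced by $\omega_r+C(r)$, which proves the lemma. The most delicate point is the commutator bookkeeping in the low-regularity regime $s\in(2\nu+\tfrac{1}{2},2\nu+1)$: both the $\Lambda^{s-1}$-commutator in the accretiveness calculation and the lower-order remainder $\Lambda^{-2\nu}[\Lambda^{2\nu},u]\partial_x w$ must be placed inside exactly the parameter range permitted by Lemma~\ref{L-comm}, and this is precisely where the sharp hypothesis $s>2\nu+\tfrac{1}{2}$ genuinely enters.
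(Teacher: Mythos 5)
Your strategy is sound but genuinely different from the paper's. The paper does not decompose $A(u)$: it treats the whole coefficient $a(u)=1+u+\Lambda^{-2\nu}[u,(-\partial_x^2)^{\nu}]$ in (\ref{ql}) as a single object ``in $H^s$'', proves the dissipativity estimate for $A(u)=a(u)\partial_x$ directly (the same commutator-plus-integration-by-parts computation you perform for $A_1$, justified on all of $D(A)$ by a separate core lemma), and then obtains the range condition not by elliptic regularization but by duality: it shows $A(u)$ is closed, identifies $D(A^*)=D(A)$, and concludes that $A(u)+\lambda I$ has dense (hence, being closed with closed range, full) range because any $z$ orthogonal to the range would satisfy $(A^*+\lambda I)z=0$ and therefore vanish by the accretivity bound. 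Your splitting $A=A_1+A_2$ with $A_2$ bounded is arguably the cleaner route: $\Lambda^{-2\nu}[u,(-\partial_x^2)^{\nu}]$ is an operator of order $-1$, not a multiplication operator, so quantities such as $a_x$ and $\|a_x\|_{L^\infty}$ that the paper manipulates are not literally defined, whereas in your setup the transport coefficient $1+u$ genuinely is a function and the Lumer--Phillips plus bounded-perturbation combination is standard and airtight. The price is the longer viscosity-regularization argument for the range of $A_1+\lambda I$, where the paper's adjoint argument is shorter (and you should still record, as the paper does via its core lemma, why the integration by parts is legitimate for general elements of the domain rather than only for smooth $w$).

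One step does not close as written: the boundedness of $A_2(u)$ on $H^{s-1}$ in the regime $2\nu+\tfrac{1}{2}<s<2\nu+1$, which is nonempty and relevant (for $\nu=1$ it is $\tfrac{5}{2}<s<3$). There you need $\|[\Lambda^{2\nu},u]\partial_x w\|_{s-1-2\nu}\leq C\|u\|_{s}\|w\|_{s-1}$ with a negative target index $s-1-2\nu\in(-\tfrac{1}{2},0)$. Your proposed fix --- majorize the $H^{s-1-2\nu}$ norm by the $L^2$ norm and apply Lemma \ref{L-comm} with exponent $0$ --- yields the bound $C\|u\|_{\sigma}\|\partial_x w\|_{2\nu-1}$, and control of $\partial_x w$ in $H^{2\nu-1}$ requires $w\in H^{2\nu}$, which does \emph{not} follow from $w\in H^{s-1}$ precisely because $2\nu>s-1$ in this regime. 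Closing this requires a commutator estimate valid at negative Sobolev indices (Taylor's paper contains such variants, but not the version quoted as Lemma \ref{L-comm}). To be fair, the paper has the identical difficulty --- its Lipschitz estimate for (A2) also invokes Lemma \ref{L-comm} at the index $s-1-2\nu$ without comment --- but since you explicitly single out this endpoint regime as the delicate point your argument handles, the handling needs to actually work.
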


\begin{proof}

A linear operator $A=A(u)$ in $X$ is \mbox{quasi-m-accretive} if \cite{KatoII}
 \begin{enumerate}
  \item [(a)] There is a real number $\beta$ such that $(A\omega,\omega)_X\geq -\beta||\omega||_X^2$ for all $\omega\in D(A)$.
  \item [(b)] The range of $A(u)+\lambda I$ is all of $X$ for some (or equivalently, all) $\lambda>\beta$.
 \end{enumerate}

Note that if the above property (a) holds, then $A+\lambda I$ is dissipative for all $\lambda>\beta$. Moreover, if $A$ is a closed operator, then $A+\lambda I$ has closed range in $H^{s-1}$ for all $\lambda>\beta$. Hence, in order to prove (b) in such a case, it is enough to show that $A+\lambda I$ has dense range in $H^{s-1}$ for all $\lambda>\beta$.  

First we show that for given $u\in H^s$, $A=a(u)\partial_x=a\partial_x$ is a well-defined linear operator on $H^{s-1}$.

Note that if $u\in H^s$, then $a=a(u)\in H^s$. The image $A w$ for a general $w\in H^{s-1}$ is uniquely determined since for $r>1/2$, the usual (pointwise) product $H^r\times H^r \to H^r$ is continuous, the product rule holds and  
\bew
(a w)_x = a_x w + a w_x  \quad \text{in} \; H^{s-2}. 
\eew
If $w\in D(A)$ we have that both $(a w)_x$ and $a_x w$ lie in $H^{s-1}$, hence 
\bew
A w = (a w)_x - a_x w = a w_x \in H^{s-1}.
\eew

Furthermore, $A$ is a closed operator.

Let $(v_n)_{n\in\N}$ be a sequence in $D(A)$ with $v_n\to v$ in $H^{s-1}$ and $A v_n \to w$ in $H^{s-1}$. Then $a v_n \in H^{s}$ for all $n\in \N$ by definition of $D(A)$ since an alternative way of writing the domain is $D(A)=\{\omega\in H^{s-1}:a(u)\omega\in H^{s}\}$ and $v_n\in D(A)$. Moreover, both $a v_n \to a v$ and $a_x v_n \to a_x v$ in $H^{s-1}$.
Therefore $(a v_n)_x \to w+a_x v$ in $H^{s-1}$. Having sequences $(a v_n)_{n\in\N}$ and $((a v_n)_x)_{n\in\N}$ convergent in $H^{s-1}$ implies that $(a v_n)_{n\in\N}$ converges in $H^s$ due to the following observations:\\
Both $(\Lambda^{s-1}(a v_n))_{n\in\N}$ and $(\Lambda^{s-1}(a v_n)_x)_{n\in\N}=(\partial_x\Lambda^{s-1}(a v_n))_{n\in\N}$ converge in $L^2$ , so $(\Lambda^{s-1}(a v_n))_{n\in\N}$ converges in $H^1$ and hence $(a v_n)_{n\in\N}$ converges in $H^s$.
Since the limit was already determined in $H^{s-1}$ it follows that $a v_n \to a v$ in $H^s$, thus $v\in D(A)$. Moreover the continuity of $\partial_x\colon H^s \to H^{s-1}$ implies that $\lim_{n\to\infty} (a v_n)_x = (a v)_x $, therefore $w=(a v)_x-a_x v = A v$.

Now, we take the following $H^{s-1}$ inner product
\begin{align}
 (A(u)\omega,\omega)_{s-1} &=(a(u)\partial_x\omega ,\omega)_{s-1} \nonumber\\
&=(\Lambda^{s-1}a(u)\partial_x\omega ,\Lambda^{s-1}\omega)_{0} \nonumber \\
&=([\Lambda^{s-1},a(u)]\partial_x\omega ,\Lambda^{s-1}\omega)_{0}+(a(u)\partial_x\Lambda^{s-1}\omega ,\Lambda^{s-1}\omega)_{0} \label{est}.
\end{align}
Using Cauchy-Schwartz's inequality and Lemma \ref{L-comm} with $m=s-1$, $\sigma=s$, we get the following estimate for the first term of (\ref{est}):
\begin{align*}
|([\Lambda^{s-1},a(u)]\partial_x\omega ,\Lambda^{s-1}\omega)_{0}|&\leq C||a(u)||_s||\partial_x \omega||_{s-2}||\omega||_{s-1}\\
&\leq \tilde{C}||\omega||_{s-1}^2
\end{align*}
for some constant $\tilde{C}$ depending on $||u||_s$.

For the second term of (\ref{est}), we refer to Lemma \ref{L-core} and use integration by parts to get:

\begin{align*}
|(a(u)\partial_x\Lambda^{s-1}\omega ,\Lambda^{s-1}\omega)_{0}|
&=|-\frac{1}{2}(a_x, (\Lambda^{s-1}\omega)^2)_0|\\
&\leq C ||a_x||_{L^\infty}||w||_{s-1}^2\\
&\leq \tilde{C}||\omega||_{s-1}^2
\end{align*}
Choosing $\beta=\tilde{c}||u||_{H^{s}}$, $\tilde{c}=\tilde{c}(s)$, the operator satisfies the inequality in (a). Thus, $A(u)+\lambda I$ is dissipative for all $\lambda>\beta$. Moreover, recall that $A(u)$ is a closed operator. Therefore, we now show that $A(u)+\lambda I$ has dense range in $H^{s-1}$ for all $\lambda>\beta$. 

It is known that the adjoint of an operator has trivial kernel, then the operator has dense range \cite{RS}. For $A(u)=a(u)\partial_x $, the adjoint operator can be expressed

\begin{displaymath}
A^*(u)=-\partial_x (a(u)).
\end{displaymath}

Observe that 
\bew
\partial_x a(u)\omega= (a(u)\omega)_x=a_x(u)\omega+a(u)\omega_x
\eew
and since $u_x\in L^{\infty}$ and $\omega\in H^{s-1}$, we have $a_x(u)\omega\in H^{s-1}$. \\
Having also $a(u)\omega_x\in H^{s-1}$ reveals that
\begin{align*}
&\mathcal{D}(A)=\{\omega\in H^{s-1}:A(u)\omega\in H^{s-1}\}=\\
&\mathcal{D}(A^*)=\{\omega\in H^{s-1}:A^{*}(u)\omega\in H^{s-1}\}.
\end{align*}

Now, assume to the contrary that $A(u)+\lambda I$ does not have a dense range in $H^{s-1}$. Then, there exists $0\neq z\in H^{s-1}$
such that $((A(u)+\lambda I)\omega,z)_{s-1}=0$ for all $\omega\in \mathcal{D}(A)$. Since $H^s\subset \mathcal{D}(A)$, $\mathcal{D}(A)=\mathcal{D}(A^*)$ is dense in $H^{s-1}$. It means that there exists a sequence $z_k\in \mathcal{D}(A^*)$ such that it converges to an element $z\in H^{s-1}$. Recall that $A^*$ is closed. So, $z\in \mathcal{D}(A^*)$. Moreover,
\begin{equation*}
((A(u)+\lambda I)\omega,z)_{s-1}=(\omega,(A(u)+\lambda I)^*z)_{s-1}=0
\end{equation*}
reveals that $A^*(u)+\lambda z=0$ in $H^{s-1}$. Multiplying by $z$ and integrating by parts, we get
\begin{equation*}
0=((A^*(u)+\lambda I)z,z)_{s-1}=(\lambda z,z)_{s-1}+(z,A(u)z)_{s-1}\geq (\lambda-\beta)||z||_{s-1}^2~~~\forall \lambda>\beta
\end{equation*}
and thus, $z=0$ which contradicts our assumption. It completes the proof of (b). Therefore, the operator $A(u)$ is quasi-m-accretive.
\end{proof}

In the proof of Lemma \ref{opA} we used the following fact that $\mathcal C^{\infty}$ is a \emph{core} for $A$ in $H^{s-1}$, i.e. $A(u)v$ can be approximated by smooth functions in $H^{s-1}$  (similar arguments done for CH can be found in \cite{CE2}): 

\begin{lemma}
\label{L-core}
    Given $v\in D(A)$ there exists a sequence $(v_n)_n$ in $\mathcal C^{\infty}$ such that both $v_n \to v$ and $A v_n \to Av$ in $H^{s-1}$.
\end{lemma}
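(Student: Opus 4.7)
The natural approach is standard Friedrichs mollification. Let $\rho\in\mathcal C_c^\infty(\mathbb R)$ be a nonnegative mollifier with $\int\rho=1$, set $\rho_\epsilon(x):=\epsilon^{-1}\rho(x/\epsilon)$, and write $J_\epsilon w:=\rho_\epsilon\ast w$. Define $v_n:=J_{1/n}v$; then $v_n\in\mathcal C^\infty$, and $v_n\to v$ in $H^{s-1}$ by the standard convergence properties of mollifiers, disposing of half the claim at once.

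The substance of the lemma is the convergence $Av_n\to Av$ in $H^{s-1}$. Since $J_\epsilon$ commutes with $\partial_x$, one has
\begin{equation*}
A v_n \;=\; a\,\partial_x J_{1/n} v \;=\; J_{1/n}(a\,\partial_x v)+[a,J_{1/n}]\partial_x v \;=\; J_{1/n}(Av)+[a,J_{1/n}]\partial_x v,
\end{equation*}
where $a=a(u)$ and the product $a\,\partial_x v$ is to be interpreted as the element $Av\in H^{s-1}$ provided by $v\in D(A)$. The first summand converges to $Av$ in $H^{s-1}$ because $Av\in H^{s-1}$, so the proof reduces to the commutator estimate $\|[a,J_{1/n}]\partial_x v\|_{s-1}\to 0$.

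This is a Friedrichs-type commutator lemma, which I would prove in two steps. First, an integration by parts in the convolution kernel yields
\begin{equation*}
[a,J_\epsilon]\partial_x v(x)\;=\;\int \rho_\epsilon'(x-y)\,[a(x)-a(y)]\,v(y)\,dy \;+\; J_\epsilon(a_x v)(x),
\end{equation*}
and the Lipschitz bound $|a(x)-a(y)|\le\|a_x\|_{L^\infty}|x-y|$ (valid because $a\in H^s\hookrightarrow C^1$ for $s>5/2$) absorbs the singular factor $\rho_\epsilon'$; after applying $\Lambda^{s-1}$ and invoking Lemma~\ref{L-comm} in the same spirit as in the proof of Lemma~\ref{opA}, this produces a uniform bound $\|[a,J_\epsilon]\partial_x w\|_{s-1}\le C\|a\|_s\|w\|_{s-1}$ for $\epsilon\in(0,1]$ and any $w\in H^{s-1}$. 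Second, for $v\in\mathcal C_c^\infty$ a direct Taylor expansion $a(x)-a(y)=(x-y)a_x(x)+O(|x-y|^2)$ inside the integral shows that the first piece converges to $-a_x v$ in $H^{s-1}$, cancelling the limit $a_x v$ of $J_\epsilon(a_x v)$ and giving $[a,J_\epsilon]\partial_x v\to 0$; density of $\mathcal C_c^\infty$ in $H^{s-1}$, together with the uniform bound, then extends this to arbitrary $v\in H^{s-1}$.

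The principal obstacle is promoting the classical $L^2$ Friedrichs commutator estimate to the $H^{s-1}$ norm: one has to control the derivatives that $\Lambda^{s-1}$ brings onto $v$ inside the commutator, and this is exactly where the commutator estimate of Lemma~\ref{L-comm} together with the multiplier property of $H^s$ on $H^{s-1}$ (for $s>5/2$) is used.
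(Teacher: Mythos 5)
Your proposal is correct and follows essentially the same route as the paper: mollify, reduce the problem to the Friedrichs commutator $[a,J_{1/n}]\partial_x v$ (which is exactly the paper's term $P_nv=a(v_n)_x-\rho_n*(av_x)$, the paper merely packaging the comparison through $(av)_x$ rather than $av_x$), and then establish a uniform bound $\|[a,J_\epsilon]\partial_x w\|_{s-1}\leq C\|a\|_s\|w\|_{s-1}$ via the same integration by parts in the kernel and the Lipschitz bound on $a$, concluding by convergence on the dense subset $\mathcal C_c^\infty$. The only difference is cosmetic: your direct commutator identity is a slightly cleaner bookkeeping of the same three terms.
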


\begin{proof}
Let $v\in D(A)$ and fix $\rho\in C_c^\infty$, with $\rho \geq 0$ and $\int_{\R} \rho =1$. Given $n\geq 1$, let $\rho_n= n\rho(nx)$. If we set $v_n\coloneqq \rho_n * v$, then $v_n \in C_c^\infty$ for $n\geq 1$ and $v_n \to v$ in $H_p^{s-1}$.
We have to prove that $(av_n)_x \to (av)_x$ in $H^{s-1}$. Since $v\in D(A)$, we have that $av_x\in H^{s-1}$ and hence 
 $\rho_n *(av_x)\rightarrow av_x$. Moreover, since $av_n\in H^s$ it follows that $(av_n)_x = a_x v_n + a(v_n)_x \in H^{s-1}$,  hence we have that  $a_xv_n\rightarrow a_x v$ in $H^{s-1}$. Therefore
\begin{align*}
    (av_n)_x-(av)_x&=a_xv_n-a_xv +\rho_n *(av_x)-av_x + a(v_n)_x - \rho_n*(av_x)
\end{align*}
holds true and it suffices to show that $ a(v_n)_x-\rho_n*(av_x) \to 0$ in $H^{s-1}$. To this end, denote 
$$P_n v \coloneqq a(v_n)_x-\rho_n*(av_x), \quad  n\geq 1.$$ 
We will show   that there exists $K>0$ independent of $v$ such that 
\be
\label{eq-Pn} 
\|P_n v \|_{s-1} \leq K\|v\|_{s-1}, \quad n\geq 1. 
\ee
That will enable us to conclude that $P_n$ is uniformly bounded in $H^{s-1}$ by uniform boundedness principle. When we approximate $v$ in $H^{s-1}$ by smooth functions, and use this conclusion, we will be able to prove the assertion $P_n\to 0$ for $v\in C_c^\infty$. Since the set of smooth functions is dense in $H^{s-1}$ and $P_n$ are uniformly bounded, the proof will be completed. 

We first notice that 
\begin{align*}
P_n v(x) &= \int_{\R}(\rho_n)_y(y) (a(x) - a(x-y))v(x-y) dy +(\rho_n*(a_x v))(x)\\
             &= n^2\int_{\R} \rho_y(ny) (a(x)-a(x-y)) v(x-y)dy+(\rho_n*(a_x v))(x)\\
             &= n\int_{-1}^1 \rho_y(y) (a(x)-a(x-\frac{y}{n})) v(x-\frac{y}{n})dy+(\rho_n*(a_x v))(x),
\end{align*}
where $\rm supp(\rho)\subset [-1,1]$. Moreover, using the mean value theorem, we obtain the estimate 
\begin{align*}
|n^2\int_{\R} \rho_y(ny) (a(x)-a(x-y)) v(x-y)dy|
        &= |n^2\int_{\R} \rho_y(ny) a_x(x_0) y v(x-y)dy|\\
        &= |\int_{-1}^1\rho_y(y) a_x(x_0) y v(x-y)dy|\\
        &\leq ||a_x||_{L^\infty}\int_{-1}^1 |\rho_y(y)|\,|y|\,|v(x-\frac{y}{n})|dy.
\end{align*}
for some $x_0\in (x,x-y)$. Let now  $C\coloneqq \sup_{x\in\mathbb{R}}||a_x||_{L^\infty}^2\int_{-1}^1 |\rho_y(y)y|^2dy$. Then the Cauchy-Schwarz inequality, Fubini's theorem and the fact that the operator $\Lambda^{s-1}$ commutes with integration yield that
\begin{align*}
&||n^2\int_{\R} \rho_y(ny) (a(x)-a(x-y)) v(x-y)dy||^2_{s-1}\\
&=||\Lambda^{s-1}\int_{-1}^1 \rho_y(y) a_x(x_0)y (v(x-\frac{y}{n}))dy||^2_{2}\\
&=\int_{\R} \left|\int_{-1}^1 \rho_y(y) a_x(x_0)y \Lambda^{s-1}v(x-\frac{y}{n})dy\right|^2 dx\\
&\leq C \int_{-1}^1\int |\Lambda^{s-1}v(x-\frac{y}{n})|^2 dx dy\\
&\leq 2C||v||_{s-1}
\end{align*}
Moreover, we obtain by Plancherel's theorem that 
\begin{align*}
||\rho_n*(a_x v)||_{s-1} &= ||\Lambda^{s-1}(\rho_n*(a_x v))||_{2} 
        = ||\rho_n*\Lambda^{s-1}(a_x v))||_{2} 
    &\leq     ||\Lambda^{s-1}(a_x v)||_2\\
    &\leq  ||a_x||_{L^\infty}||v||_{s-1}
\end{align*}
Therefore we may conclude that
\begin{equation}\label{bound}
||P_n v||_{s-1}\leq (\sqrt{2C}+||a_x||_{L^\infty})\,||v||_{s-1},~~n\geq 1.
\end{equation}
For $K=\sqrt{2C}+||a_x||_{L^\infty}$ in (\ref{eq-Pn}), proof is completed by the estimate (\ref{bound}).
\end{proof}

We continue with the proof of assumption (A2):

\begin{lemma}
$A$ maps $Y$ into $\mathcal L(Y,X)$, more precisely the domain $D(A(u))$ contains $Y$ and the restriction $A(u)|_Y$ belongs to $\mathcal L(Y,X)$ for any $u\in Y$. Furthermore $A$ is Lipschitz continuous in the sense that for all $r>0$ there exists a constant $C_1$ which only depends on $r$ such that
\begin{equation*}
\| A(u) - A(v) \|_{\mathcal L(Y,X)} \leq C_1 \, \|u-v\|_X 
\end{equation*}
for all $u,~v$ inside $\mathrm B_r(0) \subseteq Y$.
\end{lemma}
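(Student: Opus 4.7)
The plan is to decompose $A(u)w=(1+u)w_x+\Lambda^{-2\nu}[u,(-\partial_x^2)^\nu]w_x$ into a multiplication part and a commutator part, estimate each by appropriate product and commutator bounds, and observe that only the $X$-norm of $u-v$ will enter on the right-hand side. Writing
\begin{equation*}
(A(u) - A(v))w = (u-v)w_x + \Lambda^{-2\nu}[u-v,(-\partial_x^2)^\nu]w_x,
\end{equation*}
I will treat the two summands independently.

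For the first term, since $s-1 > 2\nu - \tfrac{1}{2} \geq \tfrac{3}{2}$, the space $H^{s-1}$ is a Banach algebra, giving at once $\|(u-v)w_x\|_{s-1} \leq C\|u-v\|_{s-1}\|w\|_s$. For the commutator term I will use $\|\Lambda^{-2\nu}g\|_{s-1} = \|g\|_{s-1-2\nu}$ together with $(-\partial_x^2)^\nu = \Lambda^{2\nu} - I$, so the remaining task is to bound $\|[\Lambda^{2\nu},u-v]w_x\|_{s-1-2\nu}$. Lemma \ref{L-comm} with $m=2\nu$, target regularity index $s-1-2\nu$, $\sigma = s-1$, $f = u-v$, $g = w_x$ produces
\begin{equation*}
\|[\Lambda^{2\nu},u-v]w_x\|_{s-1-2\nu} \leq C\|u-v\|_{s-1}\|w_x\|_{s-2} \leq C\|u-v\|_X\|w\|_Y,
\end{equation*}
whose hypothesis $3/2 < s-1$ is exactly $s > 5/2$, holding by standing assumption. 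Adding the two estimates produces the Lipschitz bound with constant $C_1=C_1(r)$ depending only on the radius.

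The remaining claims $A(u)\in\mathcal L(Y,X)$ and $D(A(u))\supset Y$ will then drop out by specialising $v=0$: since $A(0)w = w_x$ obeys $\|A(0)w\|_{s-1} \leq \|w\|_s$, the Lipschitz estimate upgrades to $\|A(u)w\|_{s-1} \leq (1+Cr)\|w\|_s$ for $u\in\mathrm B_r(0)\subset Y$, which automatically places every $w\in H^s$ inside $D(A(u))$. The main obstacle I foresee is the delicate balancing of Sobolev indices in Lemma \ref{L-comm} so that the $X$-norm $\|u-v\|_{s-1}$, rather than the $Y$-norm, is what appears: the $2\nu$ orders of smoothing provided by $\Lambda^{-2\nu}$ are exactly what permit the commutator to cost only one derivative on $w$ (rather than $2\nu$), which in turn makes the choice $\sigma = s-1$ viable and keeps the bound linear in $\|u-v\|_X$.
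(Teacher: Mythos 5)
Your proposal is correct and follows essentially the same route as the paper: the same splitting of $(A(u)-A(v))w$ into the multiplication term $(u-v)w_x$ and the smoothed commutator term, with Lemma \ref{L-comm} applied for $m=2\nu$, $\sigma=s-1$ to land on $\|u-v\|_{s-1}\|w\|_s$. The only cosmetic differences are that you invoke the algebra property of $H^{s-1}$ where the paper uses an $L^\infty$ bound on $w_x$, and that you spell out the $v=0$ specialisation for the boundedness claim, which the paper leaves implicit.
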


\begin{proof}
Given $u,~v,~z\in H^{s}$ with $s>\frac{5}{2}$,

\begin{align*}
||(A(u)-A(v))z||_{s-1}&=||((u-v)+\Lambda^{-2\nu} [(u-v),(-\partial_x^2)^{\nu}])\partial_x z||_{s-1}\\
& \leq C( ||(u-v)\partial_x z||_{s-1}+||\Lambda^{-2\nu} [(u-v),(-\partial_x^2)^{\nu}]\partial_x z||_{s-1})\\
&= C( ||(u-v)\partial_x z||_{s-1}+||[(u-v),(-\partial_x^2)^{\nu}]\partial_x z||_{s-1-2\nu})\\
&\leq C\big(||u-v||_{s-1}||\partial_x z||_{L^\infty}+||(u-v)||_{s-1}||\partial_x z||_{s-2}\big)\\
&\leq C ||u-v||_{s-1}|| z||_{s}
\end{align*}
in view of Lemma \ref{L-comm} for $m=2\nu$, $\sigma=s-1$. 

%
\end{proof}

Now, we define a bounded linear operator and prove assumption (A3):
\begin{lemma}
For any $u\in Y$ there exists a bounded linear operator $B(u) \in \mathcal L(X)$ satisfying $B(u) = \Lambda A(u) \Lambda^{-1} - A(u)$ and $B \colon Y \to \mathcal L(X)$ is uniformly bounded on bounded sets in $Y$. Furthermore for all $r>0$ there exists a constant $C_2$ which depends only on $r$ such that  
\bew
\| B(u) - B(v)\|_{\mathcal L(X)} \leq C_2 \, \|u-v\|_Y
\eew
for all $u,v \in \mathrm B_r(0)\subseteq Y$. Here, $A(u)$ is the operator given by (\ref{ql}).
\end{lemma}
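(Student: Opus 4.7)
The plan is to reduce $B(u)$ to a single commutator by exploiting the explicit form of $a(u)$, and then apply the commutator estimate of Lemma \ref{L-comm}.

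First I would simplify $a(u)$. Since $\Lambda^{2\nu}=1+(-\partial_x^2)^{\nu}$, a short computation gives
\begin{equation*}
\Lambda^{-2\nu}[u,(-\partial_x^2)^{\nu}] = \Lambda^{-2\nu}u\Lambda^{2\nu} - u,
\end{equation*}
so that $a(u) = 1 + \Lambda^{-2\nu}u\Lambda^{2\nu}$. Because $\partial_x$ commutes with $\Lambda$, $\Lambda^{-2\nu}$ and $\Lambda^{2\nu}$, the operator $B(u)=[\Lambda,A(u)]\Lambda^{-1}=[\Lambda,a(u)]\Lambda^{-1}\partial_x$ reduces to
\begin{equation*}
B(u) = \Lambda^{-2\nu}\,[\Lambda,u]\,\Lambda^{2\nu-1}\partial_x.
\end{equation*}
This already displays $B(u)$ as a composition whose net order is zero: $\Lambda^{-2\nu}$ gains $2\nu$ derivatives, $[\Lambda,u]$ morally has order zero, and $\Lambda^{2\nu-1}\partial_x$ loses $2\nu$ derivatives.

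Next I would bound $\|B(u)w\|_{s-1}$ for $w\in H^{s-1}$. Using $\|\Lambda^{-2\nu}\cdot\|_{s-1}=\|\cdot\|_{s-1-2\nu}$ and the mapping property $\|\Lambda^{2\nu-1}\partial_x w\|_{s-1-2\nu}\leq C\|w\|_{s-1}$, the problem is reduced to controlling $\|[\Lambda,u]g\|_{s-1-2\nu}$ with $g=\Lambda^{2\nu-1}\partial_x w$. Applying Lemma \ref{L-comm} with $m=1$, $s\mapsto s-1-2\nu$, $\sigma\mapsto s$ yields
\begin{equation*}
\|[\Lambda,u]g\|_{s-1-2\nu} \leq C\,\|u\|_{s}\,\|g\|_{s-1-2\nu},
\end{equation*}
and combining the inequalities gives $\|B(u)w\|_{s-1}\leq C\|u\|_s\|w\|_{s-1}$. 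This simultaneously shows $B(u)\in\mathcal L(X)$ and that $B\colon Y\to\mathcal L(X)$ is uniformly bounded on bounded sets of $Y$.

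For the Lipschitz property I would exploit that $a(u)-1$, and therefore $B(u)$, is linear in $u$; concretely
\begin{equation*}
B(u)-B(v) = \Lambda^{-2\nu}\,[\Lambda,u-v]\,\Lambda^{2\nu-1}\partial_x,
\end{equation*}
so the same chain of estimates applied to $u-v$ in place of $u$ gives $\|B(u)-B(v)\|_{\mathcal L(X)}\leq C\|u-v\|_s=C\|u-v\|_Y$, with $C=C(r)$ uniform over $u,v\in\mathrm B_r(0)\subseteq Y$.

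The main obstacle is checking the hypotheses of Lemma \ref{L-comm} at the critical target index $s-1-2\nu$: one needs $s-1-2\nu\geq 0$ and $s-2\nu>3/2$, which is slightly stronger than the standing assumption $s>2\nu+\tfrac12$. For $s$ close to the threshold one would either work with a mildly strengthened commutator estimate (Kato--Ponce type, permitting negative indices via $L^\infty$ control of $\partial_x u$) or split into the two cases $s\geq 2\nu+\tfrac32$ and $s\in(2\nu+\tfrac12,2\nu+\tfrac32)$, in the latter replacing $\|\cdot\|_{s-1-2\nu}$ by a larger norm via Sobolev embedding before invoking Lemma \ref{L-comm}. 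Modulo this index bookkeeping the estimate is routine.
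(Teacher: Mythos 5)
Your algebraic reduction is correct and in fact cleaner than the paper's: indeed $a(u)=1+\Lambda^{-2\nu}u\Lambda^{2\nu}$, hence $B(u)=\Lambda^{-2\nu}[\Lambda,u]\Lambda^{2\nu-1}\partial_x$, everything rests on a single commutator with the \emph{function} $u$, and the Lipschitz bound does follow from linearity of $u\mapsto[\Lambda,u]$. The paper takes a different route: it applies Lemma \ref{L-comm} directly to $[\Lambda,a(u)]$ acting on $\Lambda^{-1}\partial_x w\in H^{s-1}$, with $m=1$, $\sigma=s$ and ``$\|a(u)\|_s$'' in place of $\|f\|_\sigma$. That keeps all Sobolev indices in the admissible range of Lemma \ref{L-comm}, at the price of treating the operator $\Lambda^{-2\nu}[u,(-\partial_x^2)^{\nu}]$ as if it were a multiplier with a finite $H^s$-norm; your version avoids that abuse but pushes the commutator estimate down to the index $s-1-2\nu$.

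That index problem, which you flag at the end, is a genuine gap and not mere bookkeeping, and neither of your proposed repairs closes it. You need $\|[\Lambda,u]g\|_{s-1-2\nu}\lesssim\|u\|_s\|g\|_{s-1-2\nu}$ with $g=\Lambda^{2\nu-1}\partial_x w$ controlled only in $H^{s-1-2\nu}$, and under the standing hypothesis $s>2\nu+\tfrac12$ one only knows $s-1-2\nu>-\tfrac12$, whereas Lemma \ref{L-comm} with $m=1$ requires the target index $r$ to satisfy $\tfrac12<r\leq s-1$. Enlarging the norm by Sobolev embedding backfires: to invoke Lemma \ref{L-comm} at an admissible index $s'>\tfrac12$ you must then measure $g$ in $H^{s'}$, i.e.\ $w$ in $H^{s'+2\nu}$, and in the critical range $s<2\nu+\tfrac32$ every such $s'$ exceeds $s-1-2\nu$, so $s'+2\nu>s-1$ and $\|w\|_{s-1}$ no longer controls the right-hand side. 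The $L^2$-based Kato--Ponce inequality has the same defect, since its second term requires $g\in L^\infty$, i.e.\ again $s-1-2\nu>\tfrac12$. What actually closes the gap is a negative-index extension of the commutator estimate: $\Lambda$ and multiplication by the real function $u$ are self-adjoint on $L^2$, so $[\Lambda,u]$ is skew-adjoint, and the bound $\|[\Lambda,u]\|_{\mathcal L(H^r)}\leq C\|u\|_s$ for $\tfrac12<r\leq s-1$ transfers by duality to $-(s-1)\leq r<-\tfrac12$ and then by interpolation to all $|r|\leq s-1$; since $|s-1-2\nu|\leq s-1$ whenever $s\geq\nu+1$ (which holds here because $s>2\nu+\tfrac12$ and $\nu\geq1$), this yields $\|B(u)w\|_{s-1}\leq C\|u\|_s\|w\|_{s-1}$ and, by linearity, the Lipschitz estimate. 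Without some such supplement your argument does not cover $s\in(2\nu+\tfrac12,\,2\nu+\tfrac32]$.
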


\begin{proof}
Note that
\begin{align*}
\Lambda A(u) \Lambda^{-1} - A(u)&=\Lambda a(u)\partial_x \Lambda^{-1} - a(u)\partial_x=[\Lambda,a(u)]\Lambda^{-1}\partial_x
\end{align*}
since $\partial_x$ and $\Lambda$ commute. Therefore, for $w\in H^{s-1}$,

\begin{align*}
||B(u)w||_{s-1}&=||[\Lambda,a(u)]\Lambda^{-1}\partial_x\omega||_{s-1}\\
&\leq C ||a(u)||_s||\Lambda^{-1}\partial_x\omega||_{s-1}\\
&\leq C ||a(u)||_s||\omega||_{s-1}\\
&\leq \tilde{C} ||\omega||_{s-1}
\end{align*}
for some constant $\tilde{C}$ which depends on $||u||_s$. Here, we use Lemma \ref{L-comm} with $m=1$, $\sigma=s$. Moreover,
\begin{align*}
||(B(u)-B(v))w||_{s-1}&=||[\Lambda,a(u)-a(v)]\Lambda^{-1}\partial_x\omega||_{s-1}\\
&\leq C ||a(u)-a(v)||_s||\Lambda^{-1}\partial_x\omega||_{s-1}\\
&\leq C ||a(u)-a(v)||_s||\omega||_{s-1}\\
&\leq C (||u-v||_s+||\Lambda^{-2\nu} [(u-v),(-\partial_x^2)^{\nu}]||_s)||\omega||_{s-1}\\
&\leq C (||u-v||_s+||[(u-v),(-\partial_x^2)^{\nu}]||_{s-2\nu})||\omega||_{s-1}\\
&\leq C_2 ||\omega||_{s-1}
\end{align*}
where $C_2$ is a constant depending on $||u||_s$, $||v||_s$. 
\end{proof}

The last assumption (A4) is proved below:

\begin{lemma}
The map $f\colon Y \to Y$ is locally $X$-Lipschitz continuous in the sense that for every $r>0$ there exists a constant $C_3>0$, depending only on $r$, such that
\bew
\| f(u) - f(v)\|_{X} \leq C_3 \, \|u - v\|_X \quad \text{for all} \; u,v \in \mathrm B_r(0) \subseteq Y
\eew 
and locally $Y$-Lipschitz continuous in the sense that for every $r>0$ there exists a constant $C_4>0$, depending only on $r$, such that
\bew
\| f(u) - f(v)\|_{Y} \leq C_4 \, \|u - v\|_Y \quad \text{for all} \; u,v \in \mathrm B_r(0) \subseteq Y.
\eew 

\end{lemma}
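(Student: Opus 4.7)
The plan is to exploit the smoothing/differential properties of the symbol $\Lambda^{-2\nu}\partial_x$ together with the fact that $H^r(\mathbb{R})$ is a Banach algebra for $r>1/2$. The starting point for both estimates is the algebraic identity
\begin{equation*}
f(u)-f(v) \;=\; \Lambda^{-2\nu}\partial_x\bigl((u+v)(u-v)\bigr),
\end{equation*}
so the whole proof reduces to estimating the Sobolev norms of the product $(u+v)(u-v)$ at the right level of regularity. Throughout I will use that the Fourier multiplier $\Lambda^{-2\nu}\partial_x$ maps $H^{\sigma}(\mathbb{R})$ continuously into $H^{\sigma+2\nu-1}(\mathbb{R})$ for every $\sigma\in\mathbb{R}$, which is immediate from its symbol having order $1-2\nu$.

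For the $X$-Lipschitz bound I will take $\sigma = s-2\nu$, so that $\Lambda^{-2\nu}\partial_x$ sends $H^{s-2\nu}$ into $H^{s-1}=X$. Since $s>2\nu+\tfrac12$, the exponent $s-2\nu>\tfrac12$, so $H^{s-2\nu}$ is an algebra and
\begin{equation*}
\|(u+v)(u-v)\|_{s-2\nu}\;\leq\; C\,\|u+v\|_{s-2\nu}\,\|u-v\|_{s-2\nu}\;\leq\; C\,(\|u\|_s+\|v\|_s)\,\|u-v\|_{s-1},
\end{equation*}
where I used $s-2\nu\leq s-1$ (coming from $\nu\geq 1$) in the final factor and $s-2\nu\leq s$ to absorb the $u+v$ term. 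With $\|u\|_s,\|v\|_s\leq r$ this yields $\|f(u)-f(v)\|_X\leq C_3\|u-v\|_X$ with a constant $C_3$ that depends only on $r$.

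For the $Y$-Lipschitz bound I shift everything up by one degree of regularity, choosing $\sigma=s-2\nu+1$, so that $\Lambda^{-2\nu}\partial_x$ maps $H^{s-2\nu+1}$ into $H^{s}=Y$. The condition $s>2\nu+\tfrac12$ again gives $s-2\nu+1>\tfrac32>\tfrac12$, so $H^{s-2\nu+1}$ is an algebra and the same computation yields
\begin{equation*}
\|(u+v)(u-v)\|_{s-2\nu+1}\;\leq\; C\,\|u+v\|_{s-2\nu+1}\,\|u-v\|_{s-2\nu+1}\;\leq\; C\,(\|u\|_s+\|v\|_s)\,\|u-v\|_s,
\end{equation*}
since $s-2\nu+1\leq s$. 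Combining with the mapping property of $\Lambda^{-2\nu}\partial_x$ gives the desired estimate $\|f(u)-f(v)\|_Y\leq C_4\|u-v\|_Y$ on $B_r(0)\subseteq Y$.

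There is no genuine obstacle here; the only thing to be careful about is bookkeeping of the Sobolev indices to make sure the algebra threshold $r>1/2$ is respected in both estimates, which is exactly why the hypothesis $s>2\nu+\tfrac12$ was imposed in the reformulation. In particular, the choice of $X$ and $Y$ is tight: had $s$ been allowed to be smaller, the product estimate at level $s-2\nu$ would fail.
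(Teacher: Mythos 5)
Your proof is correct and follows essentially the same route as the paper: the factorization $u^2-v^2=(u+v)(u-v)$, the order-$(1-2\nu)$ mapping property of $\Lambda^{-2\nu}\partial_x$, and a product estimate at level $s-2\nu$ (you invoke the Banach algebra property of $H^{s-2\nu}$ where the paper uses the equivalent $L^\infty$--$H^r$ tame bound). The only practical difference is that you write out the $Y$-Lipschitz estimate explicitly, which the paper dismisses with ``similar arguments.''
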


\begin{proof}
Observe that
\begin{displaymath}
f(u)= \Lambda^{-2\nu}\partial_x (u^2).
\end{displaymath}
Therefore,
\begin{align*}
||f(u)-f(v)||_{s-1} &\leq C ||u^2-v^2||_{s-2\nu}\\
                    &=    C ||(u-v)(u+v)||_{s-2\nu}\\
                    &\leq C ||(u+v)||_{L^\infty}||(u-v)||_{s-2\nu}\\
            		&\leq C ||(u+v)||_{s}||(u-v)||_{s-2\nu}\\
            		&\leq C_3 ||u-v||_{s-1}
\end{align*}
where $C_3$ is a constant depending on $||u||_{H^{s}}$ and $||v||_{H^{s}}$. This proves (ii).

Similar arguments will show that we have the following estimates:
\begin{align*}
||f(u)-f(v)||_{H^{s}}\leq& C_4 ||u-v||_{H^{s}}
\end{align*}
where $C_4$ is also a constant depending on $||u||_{H^{s}}$ and $||v||_{H^{s}}$. Since we choose $u_0\in H^{s}$, this estimate actually corresponds to the proof of continuous dependence on the initial data. Note that (i) can be obtained from (iii) by choosing $v=0$. Hence, we get the estimates for (A4).
\end{proof}


%
%
%


\end{document}